\documentclass[12pt,reqno]{amsart}
\usepackage{fullpage}
\usepackage{amssymb}
\usepackage{amsthm}
\usepackage{amsmath}
\usepackage[usenames]{color}
\usepackage{graphicx}
\usepackage[colorlinks=true,
linkcolor=webgreen,
filecolor=webbrown,
citecolor=webgreen]{hyperref}

\definecolor{webgreen}{rgb}{0,.5,0}
\definecolor{webbrown}{rgb}{.6,0,0}
\definecolor{red}{rgb}{1,0,0}

\usepackage{color}
\newtheorem{theorem}{Theorem}[section]

\newtheorem{lemma} [theorem]{Lemma}

\begin{document}

\title{On common values of $F_n$ and Nathanson's totient function $\Phi(m)$}

\author{Sagar Mandal$^{*}$}
\address{Department of Mathematics\\Indian Institute of Technology Ropar, Punjab, India.}
\email{sagar.25maz0008@iitrpr.ac.in, sagarmandal31415@gmail.com}
\thanks{$^{*}$The author is financially supported by the University Grants Commission (UGC), Government of India, through the award of the Junior Research Fellowship (JRF) with ref. No. 241620111598.}

\maketitle
\let\thefootnote\relax
\footnotetext{\it MSC2020: 11B39, 11D61, 11J86} 
\maketitle
\footnotetext{\it Keywords: Diophantine equation, Fibonacci numbers, linear forms in logarithms, Nathanson's totient function, reduction method}
\let\thefootnote\relax

    \begin{abstract}
    In a recent paper, Chatterjee, the author and Mohan posed the problem of determining all solutions of the Diophantine equation $F_n=\Phi(m)$, where $F_n$ is the $n$-th Fibonacci number and $\Phi(m)$ counts the number of nonempty sets $A \subseteq \{1, 2, \dots, m\}$ for which $\gcd(A)$ is relatively prime to $m$. In this paper, we prove that the Diophantine equation has the only  solutions $(n,m)=(1,1),(2,1),(3,2)$. The main tools used in this paper are lower bounds for linear forms in logarithms due to Matveev and Dujella-Peth{\H{o}} version of the Baker–Davenport reduction method in diophantine approximation.
\end{abstract}

\section{\protect\bigskip Introduction}
The Fibonacci sequence $\{F_n:n\geq0\}$ (\href{https://oeis.org/A000045}{\underline{A000045}}) is the binary recurrence sequence defined $F_0=0,F_1=1,$ and $ F_{n+2}=F_{n+1}+F_n$ for $n\geq0$. This is one of the most extensively studied sequences in number theory. In particular, Diophantine equations involving Fibonacci numbers and powers have attracted considerable attention over the years. A classical result of Bugeaud, Mignotte, and Siksek \cite{r4} completely determined the perfect powers appearing in the Fibonacci sequence. Later, Bravo and Luca \cite{r3} studied the equation $F_n+F_m=2^a$
and proved that it has only finitely many nonnegative integer solutions. Bravo and Bravo \cite{r1} extended this work to sums of three Fibonacci numbers, while Tiebekabe and Diouf \cite{r15,r16} considered sums of four and five Fibonacci numbers equal to powers of two. More generally, equations of the form $F_n\pm F_m=y^a$ have been investigated by several authors. In particular, Siar and Keskin \cite{r14} solved the equation $F_n-F_m=2^a,$
while Demirtürk Bitim and Keskin \cite{r6} studied the case $F_n-F_m=3^a.$
Further results for powers of five and for arbitrary perfect powers were obtained by Erduvan and Keskin \cite{r8}, Kebli, Kihel, Larone, Luca \cite{r10,r11}, and others. Recently, Jeong and Park \cite{r9} investigated the equation
$F_n^2+F_m^2=2^a$ and determined all its nonnegative integer solutions. Motivated by these developments and the increasing interest in exponential Diophantine equations involving Fibonacci numbers, the authors of \cite{Tmm} posed the problem of determining all solutions to the following equation:
\begin{align*}\label{eq1}
 F_n=\Phi(m)\tag{1}   
\end{align*}
where $\Phi(m)$ is the arithmetic function that counts the number of nonempty sets $A \subseteq \{1, 2, \dots, m\}$ for which $\gcd(A)$ is relatively prime to $m$.
Nathanson \cite{Nath} was the first to introduce and study the function $\Phi(m)$. Nathanson \cite{Nath} proved the following theorem using the M\H{o}bius inversion formula:

\begin{theorem}[\cite{Nath}, Theorem 3] \label{Nathanson Theorem}For all positive integers $m>1$, we have
    $$\Phi(m) = \sum_{d \mid m} \mu(d) 2^{m/d},$$
    where $\mu$ is the Möbius function. 
\end{theorem}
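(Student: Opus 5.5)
The plan is to count subsets by their gcd and then apply Möbius inversion.

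For a positive integer $m>1$ and each divisor $d\mid m$, let $g(d)$ be the number of nonempty sets $A\subseteq\{1,\dots,m\}$ with $\gcd(A,m)=d$. Here $\gcd(A,m)$ means the gcd of the elements of $A$ together with $m$. The condition that $\gcd(A)$ is relatively prime to $m$ is equivalent to $\gcd(A,m)=1$, so $\Phi(m)=g(1)$.

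Next I would compute the partial sums $\sum_{e\mid m,\; d\mid e} g(e)$ for each $d\mid m$. This sum counts the nonempty $A\subseteq\{1,\dots,m\}$ with $d\mid \gcd(A,m)$. Since $d\mid m$ already holds, the condition is simply $d\mid a$ for every $a\in A$. So $A$ is any nonempty subset of the multiples of $d$ in $\{1,\dots,m\}$. There are exactly $m/d$ such multiples, namely $d,2d,\dots,m$, and hence
\[
\sum_{\substack{e\mid m\\ d\mid e}} g(e)=2^{m/d}-1 .
\]

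Now I substitute $e=dk$ with $k\mid m/d$. The identity becomes $G(m/d)=2^{m/d}-1$, where $G(n)=\sum_{k\mid n} h(k)$ for a suitable reindexed function $h$. Equivalently, I apply the dual form of Möbius inversion over the divisor lattice of $m$: if $F(d)=\sum_{d\mid e\mid m} g(e)$, then $g(d)=\sum_{d\mid e\mid m}\mu(e/d)F(e)$. Taking $d=1$ gives
\[
\Phi(m)=g(1)=\sum_{e\mid m}\mu(e)\bigl(2^{m/e}-1\bigr)=\sum_{e\mid m}\mu(e)2^{m/e}-\sum_{e\mid m}\mu(e).
\]

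Finally, $\sum_{e\mid m}\mu(e)=0$ for $m>1$, so the second sum vanishes and the stated formula follows. This is exactly why the hypothesis $m>1$ is needed: for $m=1$ the formula would give $2$, whereas $\Phi(1)=1$.

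The only delicate step is the reduction from "$\gcd(A)$ coprime to $m$" to "$\gcd(A,m)=1$", together with getting the divisor-lattice inversion in the right direction. I would handle the inversion by checking the dual Möbius identity directly: $\sum_{d\mid e\mid m}\mu(e/d)F(e)$ expands to $\sum_{f} g(f)\sum_{k\mid f/d}\mu(k)$, which equals $g(d)$.
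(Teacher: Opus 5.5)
Your proof is correct. It follows essentially the route the paper attributes to Nathanson, a M\"obius-inversion argument (the paper itself gives no proof, only the citation): classify nonempty $A$ by $\gcd(A\cup\{m\})$, count the $2^{m/d}-1$ sets all of whose elements are divisible by $d$, invert over the divisors of $m$, and use $\sum_{e\mid m}\mu(e)=0$ for $m>1$ to discard the $-1$ terms.

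The one loose phrase is the aside about substituting $e=dk$, since that does not give a function of $k$ alone. You want $e=m/j$, and then $g(m/j)=\Phi(j)$ via $A\mapsto (j/m)A$, which gives the identity $\sum_{j\mid m}\Phi(j)=2^m-1$ that one would invert directly. Your direct check of the dual inversion already closes that step anyway.
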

\noindent One can see that \href{https://oeis.org/A027375}{\underline{A027375}} enumerates $\Phi(m)$ for $m\geq2$. Generalizations of $\Phi(m)$ and several properties of the sequence $\{\Phi(m)\}_{m\geq1}$ have been investigated in \cite{T1,T2,T3,T4,Tmm,T5,T6}.
In this paper, we completely solve the equation (\ref{eq1}). Our main theorem is stated below. 
\begin{theorem}\label{Thm1}
The Diophantine equation (\ref{eq1}) has the only  solutions $(n,m)=(1,1),(2,1),(3,2)$.
\end{theorem}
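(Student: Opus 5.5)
We first dispose of small values directly. Since $\Phi(1)=1$ and, by Theorem \ref{Nathanson Theorem}, $\Phi(2)=2^2-2=2$, $\Phi(3)=6$, $\Phi(4)=12$, $\Phi(5)=30$, and so on, a direct comparison with the Fibonacci numbers for all $m$ up to some modest bound (say $m\le 200$, with $n$ correspondingly bounded) produces only $(n,m)=(1,1),(2,1),(3,2)$. For the general case we write, via Theorem \ref{Nathanson Theorem},
$$\Phi(m)=2^m+E(m),\qquad E(m)=\sum_{d\mid m,\ d>1}\mu(d)2^{m/d}.$$
Every term of $E(m)$ has exponent at most $m/2$, and there are at most $d(m)\le 2\sqrt m$ of them. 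Hence $|E(m)|\le 2\sqrt m\,2^{m/2}$, and so $\Phi(m)=2^m\bigl(1+O(\sqrt m\,2^{-m/2})\bigr)$. Combining this with the Binet formula $F_n=(\alpha^n-\beta^n)/\sqrt5$, $\alpha=(1+\sqrt5)/2$, equation (\ref{eq1}) becomes
$$\Bigl|\frac{\alpha^n}{\sqrt5}-2^m\Bigr|\le \alpha^{-n}+2\sqrt m\,2^{m/2}.$$
Dividing by $2^m$ gives the linear form
$$\Lambda=n\log\alpha-m\log2-\log\sqrt5,\qquad |e^{\Lambda}-1|<3\sqrt m\,2^{-m/2}.$$
We also record the elementary relation $n\approx m\log2/\log\alpha+1.67$, so that $n$ and $m$ are comparable and $n<2m$.

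Next we check that $\Lambda\neq0$; this holds since $\alpha^{2n}\in\mathbb{Q}(\sqrt5)$ would otherwise give $\alpha^{2n}=5\cdot 4^m$, and taking norms yields $1=625\cdot 16^m$, which is absurd. We then apply Matveev's theorem in the field $\mathbb{Q}(\sqrt5)$ (degree $D=2$, $t=3$) with $\gamma_1=\alpha$, $\gamma_2=2$, $\gamma_3=\sqrt5$, heights $\tfrac12\log\alpha$, $\log2$, $\log\sqrt5$, and $B=2m$. This gives
$$\log|\Lambda|>-C(1+\log 2m)$$
with an explicit $C$ of size about $10^{12}$. Comparing with the upper bound $\log|\Lambda|<\log(6\sqrt m)-\tfrac m2\log2$ yields $m<C'\log m$, and hence an absolute bound of roughly $m<10^{14}$.

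The main obstacle is reducing this huge bound, and it is here that the Dujella--Peth\H{o} lemma enters. Dividing $\Lambda$ by $\log2$ gives
$$\bigl|n\tau-m+\mu\bigr|<A\,B^{-m}$$
with $\tau=\log\alpha/\log2$ (irrational), $\mu=-\log\sqrt5/\log2$, $B=\sqrt2$, and $A$ absorbing the factor $\sqrt m$. To handle that factor, we either use $\sqrt m<2^{m/8}$ for large $m$ and take $B=2^{3/8}$, or simply bound $\sqrt m$ by the current bound on $m$. Taking $M=2\cdot10^{14}$ and a convergent $p/q$ of $\tau$ with $q>6M$ and $\varepsilon=\|\mu q\|-M\|\tau q\|>0$, the lemma gives
$$m<\frac{\log(Aq/\varepsilon)}{\log B},$$
which should be of order a few hundred. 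One more iteration, or a direct computer search, then lowers this to within the range already checked in the first step. The only potential snag is $\varepsilon\le0$ for the first convergent tried, in which case we pass to the next convergent. This completes the proof.
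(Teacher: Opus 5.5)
Your argument is correct. Its skeleton is the paper's: Nathanson's formula, Matveev in $\mathbb{Q}(\sqrt5)$, Dujella--Peth\H{o}, then a finite search. You set up the linear form differently, though, and this makes a real difference.

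The paper first imports two outside facts: $6\mid\Phi(m)$ for $m\ge3$ (so $n=12k$) and $\Phi(m)\ge 2^m-2^{m-2}$ (so $m<12k$). It bounds the tail by $2^{2m/3}$ via $\omega(m)\le\log_2 m$, then divides by $\phi^{12k}$ and replaces $2^{2m/3}$ by $2^{8k}$. The resulting bound $(2^{2/3}/\phi)^{12k}$ has base about $0.98$. So the reduction runs with $B=\phi/1.5$ and a constant $A=4.5\,(4/1.5^3)^{4K}/\log\phi$ that depends on the current bound $K$ for $k$. Each step shrinks $K$ only by a factor of about $3/4$, and over a hundred iterations are needed to reach $k\le163$.

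You need neither outside fact. Your tail bound $d(m)\,2^{m/2}\le2\sqrt m\,2^{m/2}$, normalised by $2^m$ rather than by $\phi^n$, keeps the full decay $2^{-m/2}$ in terms of $m$ itself. So a single Dujella--Peth\H{o} step with $B=\sqrt2$ (or $2^{3/8}$), $u=n$ and a modest $A$ already brings $m$ down to the low hundreds (about $170$ for typical $q$ and $\varepsilon$), inside your direct search range. The paper's extra inputs buy only a sparser final search ($n\equiv0\pmod{12}$, $m<n$); your route is self-contained and computationally far lighter.

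Two minor points. First, the norm of $5\cdot4^m$ is $25\cdot16^m$, not $625\cdot16^m$; this is harmless. Second, as in the paper, finding a convergent of $\log\alpha/\log2$ with $q>6M$ and $\varepsilon>0$, and the search over $m\le200$, are computations that must actually be carried out, not just asserted.
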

\section{Preliminaries}
We start this section by recalling some basic notions from algebraic number theory.

Let $\eta $ be an algebraic number of degree $d$ with minimal polynomial 
$$a_{0}x^{d}+a_{1}x^{d-1}+\ldots+a_{d}=a_{0}\prod\limits_{i=1}^{d}\left(x-\eta
^{(i)}\right), $$
where the $a_{i}$'s are relatively prime integers with $a_{0}>0$ and $\eta
^{(i)}$'s are conjugates of $\eta $. Then 
\begin{align*}
  h(\eta )=\frac{1}{d}\left( \log a_{0}+\sum\limits_{i=1}^{d}\log \left( \max
\left\{ |\eta ^{(i)}|,1\right\} \right) \right)  
\end{align*}
is called the logarithmic height of $\eta .$ In particular, if $\eta =a/b$
is a rational number with $\gcd (a,b)=1$ and $b>0,$ then $h(\eta )=\log
\left( \max \left\{ |a|,b\right\} \right)$. The following are some well known properties of logarithmic height:
\begin{enumerate}
    \item[\upshape(1)] $h(\eta \pm \gamma )\leq h(\eta )+h(\gamma )+\log 2$,
    \item[\upshape(2)] $h(\eta \gamma ^{\pm 1})\leq h(\eta )+h(\gamma )$,
    \item[\upshape(3)]$h(\eta ^{s})=|s|h(\eta ),~s\in \mathbb{Z}$. 
\end{enumerate}
In order to prove Theorem \ref{Thm1}, we need the following theorem due to Matveev. 
\begin{theorem}[\cite{r13}]\label{Thm2}
Assume that $\gamma _{1},\gamma _{2},...,\gamma _{r}$ are
positive real algebraic numbers in a real algebraic number field $\mathbb{K}$
of degree $D$, $b_{1},b_{2},\ldots,b_{r}\in \mathbb{Z}$, and 
$$
\Lambda :=\gamma _{1}^{b_{1}}...\gamma _{r}^{b_{r}}-1 $$
is not zero. Then 
$$
|\Lambda |>\exp \left( -1.4\cdot 30^{r+3}\cdot r^{4.5}\cdot D^{2}(1+\log
D)(1+\log B)A_{1}A_{2}...A_{r}\right), $$
where 
$$
B\geq \max \left\{ |b_{1}|,...,|b_{r}|\right\}, 
$$
and 
$$A_{i}\geq \max \left\{ Dh(\gamma _{i}),|\log \gamma _{i}|,0.16\right\}\quad \text{for all } i=1,...,r. $$

\end{theorem}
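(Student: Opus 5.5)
This result is cited from Matveev \cite{r13} and is used in this paper as a black box; a full proof is a long piece of transcendence theory. The plan below is therefore a sketch of how I would organize a proof along the lines of Baker's method, as sharpened by Matveev. It is not a self-contained argument.

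\textbf{Reduction to a linear form in logarithms.} Put $L:=b_1\log\gamma_1+\cdots+b_r\log\gamma_r$, using real logarithms, which exist since each $\gamma_i>0$. Then $\Lambda=e^{L}-1$, and $\Lambda\neq 0$ is equivalent to $L\neq 0$. If $|\Lambda|\ge 1/2$ there is nothing to prove. Otherwise $|L|\le 2|\Lambda|$, so it suffices to prove
$$|L|>\exp\bigl(-C(r)\,D^{2}(1+\log D)(1+\log B)A_1\cdots A_r\bigr)$$
with the constant adjusted by a harmless factor.

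\textbf{Preliminary normalizations.} First I would use a Kummer-theoretic descent, in the style of Baker--W\"ustholz. This replaces $\gamma_1,\dots,\gamma_r$ by a multiplicatively independent system whose logarithms have controlled heights, at no essential cost in the $A_i$. Then I would assume, after reordering, that $|b_r|=\max_i|b_i|$. This lets one divide $L$ by $b_r$ and treat $\log\gamma_r$ as approximated by a rational combination of the other logarithms.

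\textbf{Auxiliary function and extrapolation.} The core step is to use Siegel's lemma to construct an auxiliary polynomial $P\in\mathbb{Z}[X_0,X_1,\dots,X_{r-1}]$. Its degree in $X_0$ is $L_0$ and in each $X_i$ is $L_i$. I would choose $P$ so that
$$\varphi(z)=P\bigl(z,\gamma_1^{z}\gamma_r^{-b_1z/b_r},\dots\bigr)$$
vanishes to order $T$ at the integer points $s=0,1,\dots,S$. Alternatively, following Laurent and Matveev, I would replace the Siegel step by an interpolation determinant, and I expect that to be what produces Matveev's sharp $30^{r+3}r^{4.5}$ and $D^2(1+\log D)$ dependence.

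If $|L|$ were smaller than the claimed bound, then $\varphi$ would be extremely well approximated by a function with algebraic values. The Schwarz lemma (maximum principle) combined with Liouville's inequality, which is where the heights $A_i$ enter, would then let me extrapolate: $\varphi$ vanishes at many more points or to higher order. Equivalently, the interpolation determinant is both very small analytically and nonzero with bounded height arithmetically, and these two facts contradict each other.

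\textbf{Zero estimate and optimization.} To close the argument I need a multiplicity estimate on the commutative group $\mathbb{G}_a\times\mathbb{G}_m^{r-1}$, in the form of Philippon's or W\"ustholz's zero estimate. It guarantees that the extrapolated vanishing is impossible unless it is caused by an algebraic subgroup. The subgroup case would force a multiplicative dependence among the $\gamma_i$, which the descent step has excluded, or it would reduce to a form with fewer logarithms, which is handled by induction on $r$. The final step is choosing the parameters $L_0,L_i,S,T$ proportional to $D(1+\log B)$, $A_i^{-1}$ and so on, so that the exponent factorizes as $D^2(1+\log D)(1+\log B)\prod A_i$.

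\textbf{Main obstacle.} I expect the hardest part to be obtaining Matveev's explicit constant $1.4\cdot 30^{r+3}r^{4.5}$, which is only exponential in $r$. The classical proofs give constants like $r^{cr}$. Getting a single-exponential constant requires Matveev's refined descent and a careful zero estimate, with completely explicit bookkeeping at each step.
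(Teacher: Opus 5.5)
Your proposal matches the paper's treatment. The paper gives no proof of this statement and imports it as a black box from Matveev's work, exactly as your opening sentence does, so the citation is the whole argument. The Baker-method outline you add is heuristic and should not be read as a proof. For instance, your auxiliary function with $X_i\mapsto\gamma_i^{z}\gamma_r^{-b_iz/b_r}$ does not take values close to elements of $\mathbb{K}$ at integer points. Also, Matveev's bound needs no multiplicative independence, and multiplicative independence is not what Kummer descent provides in any case.
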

We next present the following lemma which is an immediate variation of a result due to Dujella and Peth{\H{o}} \cite{r7}. In the following lemma, the function $||\cdot ||$ denotes the
distance from $x$ to the nearest integer, that is, $||x||=\min \left\{
|x-n|:n\in\mathbb{Z}\right\} $ for a real number $x.$

\begin{lemma}\label{Lem1}Let $M$ be a positive integer, let $p/q$ be a convergent of the
continued fraction expansion of the irrational number $\gamma $ such that $q>6M,$ and
let $A,B,\mu $ be some real numbers with $A>0$ and $B>1.$ Let $\epsilon
:=||\mu q||-M||\gamma q||.$ If $\epsilon >0,$ then there exists no solution
to the inequality 
$$
0<|u\gamma -v+\mu |<AB^{-w}, 
$$
in positive integers $u,v,$ and $w$ with 
$$
u\leq M\text{ and }w\geq \frac{\log (Aq/\epsilon )}{\log B}. 
$$
\end{lemma}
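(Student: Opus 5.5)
The plan is the standard argument of Dujella and Peth{\H{o}}. Assume, for contradiction, that positive integers $u,v,w$ satisfy $0<|u\gamma-v+\mu|<AB^{-w}$ with $u\leq M$ and $w\geq \log(Aq/\epsilon)/\log B$. We multiply the inequality by $q$ and use the convergent $p/q$ to replace the irrational quantity $u\gamma q$ by the integer $up$, at the cost of a small error. This gives an upper bound for $\|\mu q\|$, which contradicts the lower bound $\|\mu q\| = \epsilon + M\|\gamma q\|$ coming from the definition of $\epsilon$.

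First I check that $\|\gamma q\|=|\gamma q-p|$. Since $p/q$ is a convergent of $\gamma$, the classical estimate $|\gamma - p/q|<1/q^2$ gives $|q\gamma-p|<1/q$. The hypothesis $q>6M\geq 6$ then gives $|q\gamma - p|<1/2$, so $p$ is the integer nearest to $q\gamma$. This is the only place the condition $q>6M$ is used; any $q\ge 2$ would do for this step.

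Next, from $|u\gamma - v+\mu|<AB^{-w}$ we get $|u\gamma q - vq+\mu q|<qAB^{-w}$. Writing $u\gamma q = up + u(\gamma q-p)$, we obtain
\[
|\mu q - (vq-up)| \leq |u\gamma q - vq+\mu q| + u|\gamma q - p| < qAB^{-w} + M\|\gamma q\|,
\]
using $0<u\le M$. Since $vq-up\in\mathbb{Z}$, the left side is at least $\|\mu q\|$. Hence $\|\mu q\| - M\|\gamma q\| < qAB^{-w}$, that is, $\epsilon < qAB^{-w}$.

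Finally, since $\epsilon>0$, $A>0$ and $B>1$, this rearranges to $B^{w}<Aq/\epsilon$, i.e.\ $w<\log(Aq/\epsilon)/\log B$. This contradicts the assumed lower bound on $w$, so no such solution exists.

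There is no serious obstacle here. The only point needing care is the identification $\|\gamma q\| = |\gamma q - p|$ from the convergent property, which is the best-approximation step handled above.
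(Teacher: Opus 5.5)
Your argument is correct and is essentially the standard Dujella--Peth{\H{o}} proof; the paper gives no proof of its own and simply cites that result as an immediate variation. One small remark: if you take $p$ to be the integer nearest to $q\gamma$, the same deduction works for any positive integer $q$, so the convergent and $q>6M$ hypotheses are not needed for the argument and serve only to make $\epsilon>0$ likely when the lemma is applied.
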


The following two properties of $\Phi(n)$ will play a pivotal role in the subsequent proofs.
\begin{theorem}[\cite{Tmm}]\label{thm for bound}
    Let $n\geq 4$ be an integer. Then $\Phi(n)\geq2^{n}-2^{n-2}$.
\end{theorem}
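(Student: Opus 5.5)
We prove that $\Phi(n)\geq 2^n-2^{n-2}$ for $n\geq 4$ directly from Nathanson's formula in Theorem \ref{Nathanson Theorem}. Write
$$\Phi(n)=2^n+\sum_{\substack{d\mid n\\ d>1}}\mu(d)\,2^{n/d}.$$
It suffices to bound the tail sum from below by $-2^{n-2}$. That is, we show
$$\sum_{\substack{d\mid n\\ d>1}}2^{n/d}\le 2^{n-2},$$
and in fact we only need it for the divisors with $\mu(d)=-1$. Discarding the positive terms and bounding each negative term crudely is the whole plan.

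\textbf{Step 1 (the divisor $2$).} Every divisor $d>1$ of $n$ satisfies $n/d\le n/2$. So $d=2$, if $2\mid n$, contributes at most $2^{n/2}$. Every other divisor $d\geq 3$ contributes at most $2^{n/3}$.

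\textbf{Step 2 (counting the remaining terms).} The remaining terms are indexed by divisors $d\ge 3$, and each has exponent $n/d\le \lfloor n/3\rfloor$. The exponents $n/d$ are distinct integers in $[1,\lfloor n/3\rfloor]$. Summing over all of them is therefore at most a geometric series:
$$\sum_{\substack{d\mid n\\ d\geq 3}}2^{n/d}\le\sum_{j=1}^{\lfloor n/3\rfloor}2^j<2^{\lfloor n/3\rfloor+1}.$$
Hence the total negative contribution is less than $2^{n/2}+2^{n/3+1}$.

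\textbf{Step 3 (comparison with $2^{n-2}$).} It remains to check
$$2^{n/2}+2^{n/3+1}\le 2^{n-2}.$$
Dividing by $2^{n-2}$, this becomes $2^{2-n/2}+2^{3-2n/3}\le 1$.
\begin{itemize}
\item For $n\ge 6$ the left side is at most $2^{-1}+2^{-1}=1$.
\item For $n=4$ and $n=5$ we compute directly: $\Phi(4)=16-4=12=2^4-2^2$, and $\Phi(5)=32-2=30\ge 24$.
\end{itemize}
This completes the proof.

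The only real subtlety is Step 2. One must use that the exponents $n/d$ are distinct rather than bounding the number of divisors, since the latter gives a worse estimate. With distinctness the geometric-sum argument is clean, and the argument is elementary with no obstacle beyond the small cases.
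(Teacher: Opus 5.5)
Your proof is correct. Discarding the signs bounds the tail by the $d=2$ term plus a geometric sum over the distinct exponents $n/d\le\lfloor n/3\rfloor$. That gives $2^{n/2}+2^{\lfloor n/3\rfloor+1}\le 2^{n-2}$ for $n\ge 6$, and $\Phi(4)=12$, $\Phi(5)=30$ settle the remaining cases.

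The paper does not prove this statement; it imports it from \cite{Tmm}, so there is no in-paper argument to compare against. Your analytic route is in the same spirit as the paper's Lemma \ref{Lemma 2}, but your Step 2 is sharper than that lemma's proof. It gives $\bigl|\sum_{d\mid n,\,d>1}\mu(d)2^{n/d}\bigr|<2^{n/2}+2^{n/3+1}\le 2^{2n/3}$ already for all $n\ge 6$. The paper's cruder count $2^{n/2}(2^{\omega(n)}-1)$ is only claimed for $n\ge 79$.

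A shorter route works directly from the definition of $\Phi$, with no Möbius formula:
\begin{itemize}
\item every $A$ containing $1$ has $\gcd(A)=1$, giving $2^{n-1}$ sets;
\item every $A$ containing $n-1$ but not $1$ has $\gcd(A)\mid n-1$, which is coprime to $n$, giving $2^{n-2}$ further sets when $n\ge 3$.
\end{itemize}
Hence $\Phi(n)\ge 2^{n-1}+2^{n-2}=2^n-2^{n-2}$ for all $n\ge 3$, with no small-case checks. That argument yields only the lower bound. Yours gives the two-sided estimate $|\Phi(n)-2^n|<2^{n/2}+2^{n/3+1}$, which is the kind of information the main proof actually uses.
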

\begin{lemma}[\cite{T1,Tmm}]\label{Lemma 1} For every integer $n\geq3$, $6$ divides $ \Phi(n)$.  
\end{lemma}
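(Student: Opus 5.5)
The statement immediately above is Lemma \ref{Lemma 1}: for every $n\ge 3$, $6$ divides $\Phi(n)$. I will prove divisibility by $2$ and by $3$ separately, using the M\"obius formula of Theorem \ref{Nathanson Theorem},
$$\Phi(n)=\sum_{d\mid n}\mu(d)\,2^{n/d}.$$

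\emph{Divisibility by $2$.} Every term has the form $\pm 2^{n/d}$ with $n/d\ge 1$, so each term is even. Hence $\Phi(n)$ is even for every $n>1$.

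\emph{Divisibility by $3$.} I will work modulo $3$, where $2\equiv -1$, so
$$\Phi(n)\equiv \sum_{d\mid n}\mu(d)(-1)^{n/d}\pmod 3.$$
The idea is to recognize this sum as a Dirichlet convolution. Set $g(k)=(-1)^k$. Then the sum is $(\mu * g)(n)$, evaluated as $\sum_{d\mid n}\mu(d)g(n/d)$. Write $g=f*\mathbf 1$, where $\mathbf 1$ is the constant function $1$. By M\"obius inversion, $\mu*g=\mu*\mathbf 1*f=f$, so the sum equals $f(n)$. It remains to find $f$ explicitly and check that it vanishes modulo $3$ for $n\ge 3$.

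I claim $f(1)=-1$, $f(2)=2$, and $f(k)=0$ for all $k\ge 3$. To verify this, compute $\sum_{d\mid k}f(d)$:
\begin{itemize}
\item For odd $k$, only $d=1$ contributes (since $2\nmid k$), giving $-1=(-1)^k$.
\item For even $k$, both $d=1$ and $d=2$ contribute, giving $-1+2=1=(-1)^k$.
\end{itemize}
So $f*\mathbf 1=g$, as required.

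Therefore, for $n\ge 3$ we get $\Phi(n)\equiv f(n)=0\pmod 3$. As a check on the two small cases, $\Phi(1)=1\equiv -1$ and $\Phi(2)=2$, which agree with $f(1)$ and $f(2)$.

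If one prefers to avoid convolution notation, the same conclusion follows by direct computation. For odd $n>1$, every $n/d$ is odd, so the sum is $-\sum_{d\mid n}\mu(d)=0$. For even $n=2^a m$ with $m$ odd, only squarefree $d$ matter, and $n/d$ is odd exactly when $d=2d'$ with $d'\mid m$ and $a=1$. A short case split on $a=1$ versus $a\ge 2$ then gives $0$ unless $n=2$.

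Combining the two parts, $2\mid\Phi(n)$ and $3\mid\Phi(n)$, so $6\mid\Phi(n)$ for every $n\ge 3$. The only step requiring care is identifying the auxiliary function $f$; everything else is routine.
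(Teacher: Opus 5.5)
Your proof is correct, and the paper has no proof of its own to compare against: Lemma \ref{Lemma 1} is only cited from \cite{T1,Tmm}. Your argument is therefore a self-contained replacement for that citation, derived directly from Theorem \ref{Nathanson Theorem}.

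Evenness is immediate, since every exponent $n/d$ is at least $1$. For divisibility by $3$, you reduce modulo $3$ using $2\equiv -1$. You then identify $\sum_{d\mid n}\mu(d)(-1)^{n/d}$ as $\mu*g$, where $g=f*\mathbf{1}$ and $f$ takes the value $-1$ at $1$, $2$ at $2$, and $0$ elsewhere. This is clean, and it proves slightly more than needed: the exact integer identity $\sum_{d\mid n}\mu(d)(-1)^{n/d}=0$ for all $n\ge 3$. Your alternative case split on the $2$-adic valuation of $n$ is also correct.

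One small slip, which occurs only in your sanity check. $\Phi(1)=1$ is \emph{not} congruent to $-1$ modulo $3$. The value $-1$ is what the M\"obius formula gives at $n=1$ (namely $2^1\equiv -1$). But Theorem \ref{Nathanson Theorem} holds only for $m>1$, and indeed $\Phi(1)=1\neq 2$. This does not affect the case $n\ge 3$, so the proof stands.
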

It is well known that 
\begin{align*}\label{eq2}
F_{n}=\dfrac{\phi ^{n}-\psi ^{n}}{\sqrt{5}}   \quad \text{for  } n\geq0, \tag{2}
\end{align*}
where $\phi =\dfrac{1+\sqrt{5}}{2}$ and $\psi =\dfrac{1-\sqrt{5}}{2},$ which are the roots of the characteristic equation $x^{2}-x-1=0$. The following inequality can be proved by induction:
\begin{align*}\label{eq3}
    \phi ^{n-2}\leq F_{n}\leq \phi ^{n-1}  \quad \text{for  } n\geq1.\tag{3}
\end{align*}
\section{Proof of the main theorem}
To prove the main theorem, we require the following auxiliary lemma.
\begin{lemma}\label{Lemma 2} Let $n\geq 79$ be an integer. Then
$$\bigg| \sum_{\substack{d \mid n\\d>1}} \mu(d) 2^{n/d}\bigg|\leq 2^{2n/3}.$$    
\end{lemma}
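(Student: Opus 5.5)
Since $\mu(d)=0$ unless $d$ is squarefree, we only need to consider divisors $d>1$ of $n$ that are squarefree. For each such $d$ we have $d\geq 2$, so $n/d\leq n/2$. The plan is to separate the term with the largest exponent from all the others. Note that $d=2$ occurs only when $n$ is even, and it gives exactly one term, of absolute value $2^{n/2}$. Every other squarefree divisor satisfies $d\geq 3$, so its term has absolute value at most $2^{n/3}$. There are at most $\tau(n)$ such divisors, where $\tau(n)$ is the number of divisors of $n$. The triangle inequality then gives
\begin{align*}
\bigg| \sum_{\substack{d \mid n\\d>1}} \mu(d) 2^{n/d}\bigg| \leq 2^{n/2} + \tau(n)\, 2^{n/3}.
\end{align*}

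The target is $2^{2n/3}$, so it is enough to prove $2^{n/2}+\tau(n)2^{n/3}\leq 2^{2n/3}$. Dividing by $2^{n/3}$, this becomes
\begin{align*}
2^{n/6}+\tau(n)\leq 2^{n/3}.
\end{align*}
We need a crude bound on $\tau(n)$. Pairing each divisor $d$ with $n/d$ shows $\tau(n)\leq 2\sqrt{n}$. It then suffices to show $2^{n/6}+2\sqrt n\leq 2^{n/3}$, or equivalently $2\sqrt n\leq 2^{n/6}(2^{n/6}-1)$.

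For $n\geq 79$ we have $2^{n/6}\geq 2^{13}$, so the right-hand side is at least $2^{13}(2^{13}-1)$. This is far larger than $2\sqrt n$ at $n=79$. The inequality persists for all larger $n$ because the right-hand side grows exponentially while $2\sqrt n$ grows like a square root. To make this monotonicity rigorous, I would compare ratios, or note that $\log(2\sqrt n)$ has derivative $1/(2n)$, which is smaller than $\tfrac{\log 2}{6}$.

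There is no real obstacle in this argument. The only care needed is in the counting: the single $d=2$ term must be treated separately, and the remaining terms must not all be bounded by $2^{n/2}$. The threshold $n\geq 79$ is clearly much stronger than this argument needs. It presumably reflects a choice made elsewhere in the paper, perhaps for use in the later Baker-type reduction, so the proof simply verifies the inequality in that range.
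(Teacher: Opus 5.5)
Your proof is correct, but it takes a different route from the paper's.

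The paper does not single out the $d=2$ term. It bounds every nonzero term by $2^{n/2}$ and counts the squarefree divisors $d>1$ exactly as $2^{\omega(n)}-1$. It then uses $2^{\omega(n)}\le n$ together with $n-1\le 2^{n/6}$ (valid for $n\ge 79$) to get $2^{n/2}(n-1)\le 2^{2n/3}$. So the whole factor $2^{n/6}$ of slack is spent absorbing the number of terms.

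You instead observe that only $d=2$ can produce a term of size $2^{n/2}$, and that all other terms are at most $2^{n/3}$. This reduces the claim to $\tau(n)\le 2^{n/3}-2^{n/6}$, which has enormous room to spare, so even the crude bound $\tau(n)\le 2\sqrt n$ suffices.

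The paper's argument is a one-line uniform estimate. Yours costs one extra case distinction but gives the sharper intermediate bound $2^{n/2}+2\sqrt n\,2^{n/3}$. That shows the sum is really of size $2^{n/2}(1+o(1))$, which is more than the lemma needs.

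Your monotonicity step is sound. The logarithm of $2^{n/6}(2^{n/6}-1)$ has derivative exceeding $\frac{\log 2}{3}$, which dominates the derivative $\frac{1}{2n}$ of $\log(2\sqrt n)$.
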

\begin{proof} For $n\geq79$, we have $n-1\leq 2^{n/6}$ and $\omega(n)\leq \log_2n$, using these inequalities, we obtain
\begin{align*}
   \bigg| \sum_{\substack{d \mid n\\d>1}} \mu(d) 2^{n/d}\bigg|=\bigg|\sum_{\substack{d\mid \text{rad}(n)\\d>1}}\mu(d)2^{n/d}\bigg|\leq \sum_{\substack{d\mid \text{rad}(n)\\d>1}}2^{n/2}
   &\leq 2^{n/2}(2^{\omega(n)}-1)\\&\leq 2^{n/2}(n-1)\leq 2^{2n/3}.
\end{align*}
This completes the proof.
\end{proof}
\begin{proof}[Proof of Theorem \ref{Thm1}]
Let $(n,m)\in \mathbb{N}^2$ be a solution of the equation (\ref{eq1}).
Assume, for the sake of contradiction that $m\geq79$. By Lemma \ref{Lemma 1}, we observe that $6\mid \Phi(m)$, it follows that $6\mid F_n$. Since $6\mid F_n$ if and only if $12\mid n$, we write $n=12k$ for some $k\in \mathbb{N}$.
From Theorem \ref{thm for bound} and (\ref{eq3}), we have 
$$2^{m-1}<\Phi(m)=F_{12k}\leq \phi^{12k-1}<2^{12k-1},$$ 
which implies that $m<12k$. Now, using (\ref{eq2}), the equation (\ref{eq1}) can be written as
$$\frac{\phi^{12k}-\psi^{12k}}{\sqrt{5}}=2^m+\sum_{\substack{d \mid m\\d>1}} \mu(d) 2^{m/d}.$$
For $m\geq79$, from the above equation and by Lemma \ref{Lemma 2}, we get
\begin{align*}
\bigg|\phi^{12k}-\sqrt{5}\cdot2^m\bigg|&=\bigg|\psi^{12k}+\sqrt{5}\sum_{\substack{d \mid m\\d>1}} \mu(d) 2^{m/d}\bigg|\leq 1+\sqrt{5}\cdot 2^{2m/3}\leq \frac{9}{4}\cdot 2^{2m/3}.
\end{align*}
After multiplying both sides of the above inequality by $\dfrac{1}{\phi^{12k}}$, we obtain
\begin{align*}\label{eq4}
    \bigg|1-\frac{\sqrt{5}\cdot 2^m}{\phi^{12k}}\bigg|\leq \frac{9\cdot 2^{2m/3}}{4\cdot\phi^{12k}}.\tag{4}
\end{align*}

Now, let us apply Theorem \ref{Thm2} with $(\gamma _{1},b_1)=(\sqrt{5},1),~(\gamma _{2},b_2)=(2 ,m),~(\gamma_{3},b_3)=(\phi,-12k)$, and 
$$\Lambda = \frac{\sqrt{5}\cdot 2^m}{\phi^{12k}} - 1.$$
Note that $\gamma_1,\gamma_2,\gamma_3\in \mathbb{Q}(\sqrt{5})$, therefore, we take $D=[\mathbb{Q}(\sqrt5):\mathbb{Q}]=2$.
Observe that $\Lambda\neq0$, as if $\Lambda=0$ then we have $\phi^{12k}=\sqrt{5}\cdot 2^m$, now conjugating the equation in $\mathbb{Q}(\sqrt{5})$, we get $\psi^{12k}=-\sqrt{5}\cdot 2^m=-\phi^{12k}$, which implies that $\phi^{12k}+\psi^{12k}=L_{12k}=0$, where $L_{12k}$ is the $12k$-th Lucas number, but this is impossible. Set
$$B=12k\geq\max\{|b_1|,|b_2|,|b_3|\},$$
$$ A_1=1.7\geq \max\{D h(\gamma_1),\,|\log \gamma_1|,\,0.16\}=2\log\sqrt{5},$$
$$A_2=1.4\geq \max\{D h(\gamma_2),\,|\log \gamma_2|,\,0.16\}=2\log2,$$
$$A_3=0.49\geq \max\{D h(\gamma_3),\,|\log \gamma_3|,\,0.16\}=\log \phi.$$

Then, by Theorem \ref{Thm2} and (\ref{eq4}), we obtain 
\begin{align*}
    \exp(-1.4\cdot 30^{6}\cdot 3^{4.5}\cdot D^2(1+\log D)\cdot A_1\cdot A_2\cdot A_3\cdot (1+\log 12k))<|\Lambda|\leq\frac{9\cdot 2^{2m/3}}{4\phi^{12k}}.
\end{align*}
It follows that
\begin{align*}
-1.4\cdot 30^{6}\cdot 3^{4.5}\cdot D^2(1+\log D)\cdot A_1\cdot A_2\cdot A_3\cdot(1+\log 12k)\leq \log(9/4)+\frac{2m}{3}\log2-12k\log \phi,
\end{align*}
from which we write
\begin{align*}
12k\log \phi&\leq \log(9/4)+\frac{2m}{3}\log2+1.4\cdot 30^{6}\cdot 3^{4.5}\cdot D^2(1+\log D)\cdot A_1\cdot A_2\cdot A_3\cdot(1+\log 12k)\\
&\leq\log(9/4)+8k\log2+1.4\cdot 30^{6}\cdot 3^{4.5}\cdot D^2(1+\log D)\cdot A_1\cdot A_2\cdot A_3\cdot(1+\log12k)\\
&\leq \log(9/4)+8k\log2+12\cdot10^{11}(1+\log 12k).
\end{align*}
The above inequality implies that $k<1.91\cdot 10^{14}$. We now need to reduce the upper bound obtained for $k$. For this, we use Lemma \ref{Lem1}.\\
Let $z=\log\sqrt{5}+m\log 2-12k\log \phi$. Then $z\neq 0$. If $z>0$, then from (\ref{eq4}), we have
$$0<z\leq e^{z}-1<\frac{9\cdot 2^{2m/3}}{4\phi^{12k}}.$$
If $z<0$, then from (\ref{eq4}), we have
$$|e^{z}-1|<\frac{9\cdot 2^{2m/3}}{4\phi^{12k}}<\frac{1}{2}, \quad \text{for }k> m/12.$$
Recall that for any nonzero real number $x<0$, if $|e^{x}-1| <\dfrac{1}{2}$, then $
|x| < 2|e^{x}-1|$. Therefore, for any $k> m/12$, we have 
$$|z|<2|e^{z}-1|<\frac{4.5\cdot 2^{2m/3}}{\phi^{12k}}.$$
From the above inequality, we get
\begin{align*}
\bigg|\frac{\log\sqrt{5}}{\log\phi}+\frac{m\log2}{\log\phi}-12k\bigg|<\frac{4.5\cdot 2^{2m/3}\cdot\phi^{-n}}{\log\phi}<\frac{4.5\cdot 2^{8k}\cdot\phi^{-{12k}}}{\log\phi}&=\frac{4.5\cdot (\dfrac{4}{1.5^3})^{4k}}{\log\phi}\cdot (\frac{\phi}{1.5})^{-12k}\label{eq5}\tag{5}\\
&\leq 6.77\cdot 10^{56372667618942}\cdot (\frac{\phi}{1.5})^{-12k}, 
\end{align*}
the last inequality follows from the fact that $k<1.91\cdot 10^{14}$.
Now, let us apply Lemma \ref{Lem1} with
\begin{align*}
    u=m,\quad \gamma=\frac{\log2}{\log \phi},\quad v=12k,\quad \mu=\frac{\log\sqrt{5}}{\log \phi},\quad A=6.77\cdot 10^{56372667618942},\quad B=\frac{\phi}{1.5},\quad w=12k.
\end{align*}
Since $k< 1.91\cdot  10^{14}$ and $u=m\leq 12k$, we set $M=22.92\cdot 10^{14}$. Choosing  
$$\frac{p}{q}=\frac{p_{37}}{q_{37}}=\frac{78462338394551841}{54471843954966727},$$
we get $q_{37}>6M$ and
$$\epsilon=||\mu q||-M||\gamma q||=0.48725\ldots>0.$$
Therefore, by Lemma \ref{Lem1}, we have
\begin{align*}
    12k<\frac{\log(Aq/\epsilon)}{\log B}<1713643433482951.
\end{align*}
It implies that $k\leq 142803619456912$.

We now substitute the new upper bound for $k$ into the inequality (\ref{eq5}), to get a new better upper bound for $k$. We iterate this procedure until no further improvement of the upper bound for $k$ is possible.

Now, using $k\leq 142803619456912$, from (\ref{eq5}), we obtain
\begin{align*}
\bigg|\frac{\log\sqrt{5}}{\log\phi}+\frac{m\log2}{\log\phi}-12k\bigg|<\frac{4.5\cdot 2^{2m/3}\cdot\phi^{-n}}{\log\phi}<\frac{4.5\cdot 2^{8k}\cdot\phi^{-{12k}}}{\log\phi}&=\frac{4.5\cdot (\dfrac{4}{1.5^3})^{4k}}{\log\phi}\cdot (\frac{\phi}{1.5})^{-12k}\\
&\leq 1.36\cdot 10^{42147753792809}\cdot (\frac{\phi}{1.5})^{-12k} 
\end{align*}
In Lemma \ref{Lem1}, we set
\begin{align*}
    u=m,\quad \gamma=\frac{\log2}{\log \phi},\quad v=12k,\quad \mu=\frac{\log\sqrt{5}}{\log \phi},\quad A=1.36\cdot 10^{42147753792809},\quad B=\frac{\phi}{1.5},\quad w=12k.
\end{align*}
Since $k\leq 142803619456912$ and $u=m\leq 12k$, we take $M=1713643433482944$. Choosing  
$$\frac{p}{q}=\frac{p_{37}}{q_{37}}=\frac{78462338394551841}{54471843954966727},$$
we get $q_{37}>6M$ and
$$\epsilon=||\mu q||-M||\gamma q||=0.48725\ldots>0.$$
Therefore, by Lemma \ref{Lem1}, we have
\begin{align*}
    12k<\frac{\log(Aq/\epsilon)}{\log B}<1281227668900324.
\end{align*}
From the above inequality, we get $k\leq 106768972408360$. 

Using this new upper bound obtained for $k$ to (\ref{eq5}), we have
\begin{align*}
\bigg|\frac{\log\sqrt{5}}{\log\phi}+\frac{m\log2}{\log\phi}-12k\bigg|<\frac{4.5\cdot 2^{2m/3}\cdot\phi^{-n}}{\log\phi}<\frac{4.5\cdot 2^{8k}\cdot\phi^{-{12k}}}{\log\phi}&=\frac{4.5\cdot (\dfrac{4}{1.5^3})^{4k}}{\log\phi}\cdot (\frac{\phi}{1.5})^{-12k}\\
&\leq 6.89\cdot 10^{31512313055458}\cdot (\frac{\phi}{1.5})^{-12k} 
\end{align*}
Now, let us apply Lemma \ref{Lem1} with
\begin{align*}
    u=m,\quad \gamma=\frac{\log2}{\log \phi},\quad v=12k,\quad \mu=\frac{\log\sqrt{5}}{\log \phi},\quad A=6.89\cdot 10^{31512313055458},\quad B=\frac{\phi}{1.5},\quad w=12k.
\end{align*}
Since $k\leq  106768972408360$ and $u=m\leq 12k$, we set $M=1281227668900320$. Choosing  
$$\frac{p}{q}=\frac{p_{37}}{q_{37}}=\frac{78462338394551841}{54471843954966727},$$
we get $q_{37}>6M$ and
$$\epsilon=||\mu q||-M||\gamma q||=0.48725\ldots>0.$$
Therefore, by Lemma \ref{Lem1}, we have
\begin{align*}
    12k<\frac{\log(Aq/\epsilon)}{\log B}<957926431766255,
\end{align*}
which implies that $k\leq 79827202647187$. Again using $k\leq 79827202647187$, we get from (\ref{eq5}) that 
\begin{align*}
\bigg|\frac{\log\sqrt{5}}{\log\phi}+\frac{m\log2}{\log\phi}-12k\bigg|<\frac{4.5\cdot 2^{2m/3}\cdot\phi^{-n}}{\log\phi}<\frac{4.5\cdot 2^{8k}\cdot\phi^{-{12k}}}{\log\phi}&=\frac{4.5\cdot (\dfrac{4}{1.5^3})^{4k}}{\log\phi}\cdot (\frac{\phi}{1.5})^{-12k}\\
&\leq 3.5\cdot 10^{23560588281570}\cdot (\frac{\phi}{1.5})^{-12k} 
\end{align*}
In Lemma \ref{Lem1} we take
\begin{align*}
    u=m,\quad \gamma=\frac{\log2}{\log \phi},\quad v=12k,\quad \mu=\frac{\log\sqrt{5}}{\log \phi},\quad A=3.5\cdot 10^{23560588281570},\quad B=\frac{\phi}{1.5},\quad w=12k.
\end{align*}
Since $k\leq  79827202647187$ and $u=m\leq 12k$, we take $M=957926431766244$. Choosing  
$$\frac{p}{q}=\frac{p_{36}}{q_{36}}=\frac{9072992347323886}{6298851569562081},$$
we get $q_{36}>6M$ and
$$\epsilon=||\mu q||-M||\gamma q||=0.2060\ldots>0.$$
Therefore, by Lemma \ref{Lem1}, we have
\begin{align*}
    12k<\frac{\log(Aq/\epsilon)}{\log B}<716206081831009,
\end{align*}
from this we obtain $k\leq 59683840152584$.

Now, iterating this procedure using a Python code \cite{Sagar} for $108$ steps, we obtain $k\leq 163$.

After that, no further improvement of the upper bound is possible.
After performing a brute-force search using a Python code \cite{Sagar} for integers $k$ satisfying $6\leq m/12<k\leq 163$, we found that equation (\ref{eq1}) has no solutions for $79\leq m<12k$.

Hence, we must have $1\leq m\leq 78$.
For $3\leq m\leq 78$, again performing a brute-force search using a Python code \cite{Sagar} for all integers $m$ satisfying $3\leq m\leq 78$, we found that equation (\ref{eq1}) has no solutions for $k$.

The proof now follows by observing that $\Phi(2)=2=F_3$ and $\Phi(1)=1=F_1=F_2$.
\end{proof}

\section{Data Availability} 	
The author confirms that the manuscript has no associated data.

\section{Competing Interests}
The author confirms that he has no competing interest.

\end{document}